\documentclass[11pt]{article}
\usepackage[a4paper,margin=30mm]{geometry}
\usepackage{amsmath,amssymb,amsthm,mathtools}
\usepackage{microtype}
\usepackage{booktabs}
\usepackage{tikz}
\usepackage{enumitem}
\usepackage[hidelinks,hypertexnames=false]{hyperref}
\usepackage{url}
\usepackage[T1]{fontenc}
\usepackage[utf8]{inputenc}

\newtheorem{theorem}{Theorem}[section]
\newtheorem{proposition}[theorem]{Proposition}
\newtheorem{lemma}[theorem]{Lemma}
\newtheorem{corollary}[theorem]{Corollary}
\newtheorem{example}[theorem]{Example}
\theoremstyle{remark}
\newtheorem{remark}[theorem]{Remark}

\newcommand{\lcmop}{\operatorname{lcm}}

\newcommand{\Pcal}{\mathcal P}
\newcommand{\1}{\mathbf 1}

\title{Extremal Least Common Multiples in Rows of Pascal's Triangle}
\author{Felix Huber\\
\small Independent researcher, Lucerne, Switzerland\\
\small \texttt{felix.huber.math@proton.me}\\
\small ORCID: \href{https://orcid.org/0009-0005-1568-1579}{0009-0005-1568-1579}}
\date{}

\begin{document}
\maketitle

\begin{abstract}
For $r\ge0$, let
\[
\mathcal P_r=\left\{\binom r0,\binom r1,\ldots,\binom r{\lfloor r/2\rfloor}\right\}
\]
be the set of distinct entries in row $r$ of Pascal's triangle. We study the least possible least common multiple of $n$ entries chosen from one row, with the row itself also free:
\[
a(n)=\min_{\substack{r\ge0,\ S\subseteq\mathcal P_r\\ |S|=n}}\operatorname{lcm}(S).
\]
Whereas the least common multiple of an entire row is given by a classical identity of Farhi, allowing both the row and the selected coefficients to vary creates a different optimization problem. We first recast the fixed-row problem exactly as a weighted prime-power exclusion problem, which explains how omitting a few coefficients can remove expensive prime-power contributions. Our main asymptotic result is
\[
\log a(n)=2n+O\!\left(n\exp\!\left[-c\frac{(\log n)^{3/5}}{(\log\log n)^{1/5}}\right]\right)
\]
for some absolute $c>0$, so $a(n)^{1/n}\to e^2$. A two-band refinement further shows that every optimal row satisfies
\[
r_n=2n+O\!\left(ne^{-c\Phi(n)}\right),
\]
and that the minimum prefix defect of an optimal support is $o(n)$.
\end{abstract}

\noindent\textbf{Keywords.} Pascal's triangle; binomial coefficients; least common multiple; $p$-adic valuation; Kummer's theorem; Lucas' theorem; extremal number theory.

\medskip
\noindent\textbf{2020 Mathematics Subject Classification.} Primary 11B65; Secondary 11N37, 11Y55.

\section{Introduction and main results}
The least common multiple of a complete row of Pascal's triangle is strikingly simple. If
\[
\Lambda_r=\operatorname{lcm}_{0\le k\le r}\binom rk,
\]
then Farhi \cite{Farhi2009} proved
\begin{equation}
\Lambda_r=\frac{\operatorname{lcm}(1,2,\ldots,r+1)}{r+1}.
\tag{1.1}\label{eq:farhi}
\end{equation}
The complete-row problem goes back at least to Problem E2686 of Montgomery and Breusch \cite{MontgomeryBreusch1979}; Hong \cite{Hong2009} related \eqref{eq:farhi} to an identity of Nair, and Guo \cite{Guo2009} proved a $q$-analogue. A separate classical line, beginning with Kummer and Lucas and including Fine \cite{Fine1947} and Rowland \cite{Rowland2011}, describes the prime and prime-power divisibility pattern inside a row.

We ask what happens when neither the row nor the selected entries are fixed. For
\begin{equation}
\mathcal P_r=\left\{\binom r0,\binom r1,\ldots,\binom r{\lfloor r/2\rfloor}\right\},
\tag{1.2}\label{eq:Pr}
\end{equation}
the set of distinct entries of row $r$, define
\begin{equation}
\boxed{a(n)=\min_{\substack{r\ge0,\ S\subseteq\mathcal P_r\\ |S|=n}}\operatorname{lcm}(S).}
\tag{1.3}\label{eq:defa}
\end{equation}
Equivalently, $a(n)$ is the smallest positive integer having at least $n$ distinct divisors that occur together in one Pascal row. The optimization in \eqref{eq:defa} differs essentially from the classical full-row problem. Farhi's identity determines the LCM once an entire row is prescribed; here both the row and the subset of coefficients are free, and deleting a small number of coefficients may remove large prime or prime-power contributions. Thus the problem involves a competition between moving to a later row, where more coefficients are available, and exploiting additional divisibility among a carefully chosen subset. This is an extremal LCM problem in the spirit of work on subsets of ordinary integer intervals \cite{CillerueloEtAl2014}, but with the rigid arithmetic structure of a Pascal row.

This additional freedom changes the finite structure: optimal supports need not be initial segments of a row. The main results show, nevertheless, that the freedom is asymptotically limited: the minimum LCM has the same leading exponential scale as the complete first admissible row, every optimal row is asymptotically forced back to that scale, and the number of omitted positions before the last selected coefficient is sublinear. We are not aware of a previous study of this simultaneous row-and-subset optimization problem or of an equivalent formulation.

The first terms are
\[
1,\ 2,\ 10,\ 56,\ 252,\ 2310,\ 12012,\ 24024,\ 680680,\ 1007760,\ldots.
\]
The first nontrivial case already shows why the row cannot be fixed in advance. Row $4$ is the first row with three distinct entries,
\[
\mathcal P_4=\{1,4,6\},\qquad \operatorname{lcm}(\mathcal P_4)=12,
\]
but row $5$ gives
\[
\mathcal P_5=\{1,5,10\},\qquad a(3)=10.
\]
Thus moving to a later row can lower the LCM by improving divisibility relations. This example also illustrates the basic tension of the problem: using the first admissible row keeps the coefficients small, whereas a later row may create divisibility relations that lower the LCM. The optimization must balance these two effects.

\begin{figure}[htbp]
\centering
\begin{tikzpicture}[font=\small]
  \node[anchor=east] at (-0.35,1.15) {row $4$};
  \foreach \x/\v in {0/1,1.25/4,2.5/6}{
    \draw[rounded corners=2pt] (\x,0.78) rectangle ++(0.9,0.7);
    \node at (\x+0.45,1.13) {$\v$};
  }
  \node[anchor=west] at (3.9,1.13) {$\operatorname{lcm}=12$};

  \node[anchor=east] at (-0.35,0) {row $5$};
  \foreach \x/\v in {0/1,1.25/5,2.5/10}{
    \draw[rounded corners=2pt] (\x,-0.35) rectangle ++(0.9,0.7);
    \node at (\x+0.45,0) {$\v$};
  }
  \node[anchor=west] at (3.9,0) {$\operatorname{lcm}=10$};
  \draw[->,thick] (6.05,1.02) -- (6.05,0.18);
  \node[align=center,anchor=west,font=\scriptsize] at (6.25,0.60)
    {later row,\\smaller LCM};
\end{tikzpicture}
\caption{The first nontrivial instance of the optimization.  Row $4$ is the first row with three distinct entries, but row $5$ wins because $5\mid10$.  The problem therefore balances coefficient size against divisibility rather than minimizing either one separately.}
\label{fig:firstcompetition}
\end{figure}

Two further phenomena drive the paper. First, although an optimal support may omit strategically expensive coefficients, the complete-row upper bound turns out to have the correct exponential scale. Second, the row in which the optimum occurs is asymptotically forced back toward the first admissible row. The bridge between these global statements and the finite combinatorics is an exact prime-power exclusion principle.

For $1\le n\le|\mathcal P_r|$, write
\[
A_r(n)=\min_{\substack{S\subseteq\mathcal P_r\\|S|=n}}\operatorname{lcm}(S).
\]
For a prime $p$, put
\[
E_p(r)=\max_{0\le k\le\lfloor r/2\rfloor}v_p\binom rk,
\qquad
H_{p,e}(r)=\left\{0\le k\le\left\lfloor\frac r2\right\rfloor:v_p\binom rk>e\right\}.
\]

\begin{theorem}[Prime-power exclusion principle]\label{thm:exclusion}
Let $d_r=\lfloor r/2\rfloor+1$. Then
\begin{equation}
\boxed{
A_r(n)=
\min_{\substack{0\le e_p\le E_p(r)\\
|\bigcup_{p\le r}H_{p,e_p}(r)|\le d_r-n}}
\prod_{p\le r}p^{e_p}.}
\tag{1.4}
\end{equation}
\end{theorem}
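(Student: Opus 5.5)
The argument is a two-inequality comparison; the only real subtlety is in the bookkeeping of indices against values. First I would note that the map $k\mapsto\binom rk$ is injective on $0\le k\le\lfloor r/2\rfloor$, since the entries strictly increase up to the middle of the row. Hence $|\mathcal P_r|=d_r$, and subsets $S\subseteq\mathcal P_r$ with $|S|=n$ correspond bijectively to index sets $K\subseteq\{0,\dots,\lfloor r/2\rfloor\}$ with $|K|=n$. Every prime dividing some $\binom rk$ is at most $r$, so for such $K$
\[
\operatorname{lcm}(S)=\prod_{p\le r}p^{\max_{k\in K}v_p\binom rk}.
\]
I will show that both sides of (1.4) are at most each other.

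\emph{Upper bound.} Take any admissible exponent vector $(e_p)_{p\le r}$ with $0\le e_p\le E_p(r)$ and $U=\bigcup_{p\le r}H_{p,e_p}(r)$ of size at most $d_r-n$. The complement of $U$ in $\{0,\dots,\lfloor r/2\rfloor\}$ has at least $n$ elements, so I choose any $K$ of size $n$ inside it. For $k\in K$ and every $p\le r$ we have $k\notin H_{p,e_p}(r)$, that is, $v_p\binom rk\le e_p$. Therefore $\binom rk$ divides $\prod_p p^{e_p}$, and so $\operatorname{lcm}(S)\le\operatorname{lcm}(S)\cdot(\text{cofactor})=\prod_p p^{e_p}$. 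This gives $A_r(n)\le\prod_p p^{e_p}$, and taking the minimum over admissible vectors gives $A_r(n)$ at most the right-hand side.

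\emph{Lower bound.} Take an optimal $K$ of size $n$ and set $e_p=\max_{k\in K}v_p\binom rk$. This $e_p$ automatically satisfies $0\le e_p\le E_p(r)$. By construction, no $k\in K$ lies in any $H_{p,e_p}(r)$, so the union of these sets is contained in the complement of $K$ and has size at most $d_r-n$. The vector is therefore admissible, and its product equals $\operatorname{lcm}(S)=A_r(n)$. So the right-hand side is at most $A_r(n)$.

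Combining the two bounds gives equality. The main point needing care is the first step: the distinct-value set $\mathcal P_r$ must be identified with the index range $0\le k\le\lfloor r/2\rfloor$, which is exactly what makes the cardinality constraint $|U|\le d_r-n$ correct. This rests on the strict unimodality $\binom rk<\binom r{k+1}$ for $k<\lfloor r/2\rfloor$, and I would verify it directly from the ratio $\binom r{k+1}/\binom rk=(r-k)/(k+1)>1$.
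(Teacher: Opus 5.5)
Your proposal is correct and follows the paper's proof essentially verbatim. In one direction, an admissible exponent vector gives $M=\prod_p p^{e_p}$, which is divisible by every coefficient indexed outside $\bigcup_p H_{p,e_p}(r)$, and hence by at least $n$ distinct entries. In the other, $e_p=v_p(L)$ for an optimal support is admissible and its product is $L$. Your explicit identification of values with indices via $(r-k)/(k+1)>1$ is the content of the paper's Section 2.2, which the paper's proof uses implicitly.
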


Theorem \ref{thm:exclusion} is the structural core of the paper. It replaces a search over subsets of binomial coefficients by an exact optimization over prime-power budgets: lowering the permitted exponent of a prime power saves multiplicative cost but deletes a precisely determined set of indices. Section 4 derives the corresponding digital valuation rule and uses $n=15$ as a guiding example.

Our first global theorem shows that, despite this freedom, the leading growth is universal.

\begin{theorem}[Asymptotic growth]\label{thm:main}
Put
\[
\Phi(x)=\frac{(\log x)^{3/5}}{(\log\log x)^{1/5}}.
\]
There exists an absolute constant $c>0$ such that
\begin{equation}
\boxed{\log a(n)=2n+O\!\left(ne^{-c\Phi(n)}\right).}
\tag{1.6}
\end{equation}
Consequently
\[
\log a(n)\sim2n,
\qquad
a(n)^{1/n}\longrightarrow e^2.
\]
\end{theorem}

The quantitative error term uses the classical Korobov--Vinogradov form of the prime number theorem. For the qualitative conclusions $\log a(n)\sim2n$ and $a(n)^{1/n}\to e^2$, the ordinary prime number theorem is sufficient.

The upper bound comes immediately from the complete row $2n-2$ and \eqref{eq:farhi}. The lower bound is the main point. For primes $p>\sqrt r$, Lucas' theorem identifies intervals of primes that must divide every admissible LCM. In the $j$th large-prime band the forced interval has exact length
\[
\frac{2n-j-3}{j(j+1)}.
\]
These lengths telescope to $2n$; the Korobov--Vinogradov prime number theorem supplies the quantitative error term.

The same prime-band geometry controls not only the value of the optimum, but also where it occurs. In this sense the optimization is asymptotically rigid: an optimal row is forced back to the first admissible scale, and an optimal support can omit only a sublinear number of positions. Thus Theorem \ref{thm:main} determines the \emph{size} of the optimum, while the next theorem determines its \emph{location}. If $r_n$ is an optimal row and $\delta(n)$ is the minimum prefix defect of an optimal support, we obtain the following second main theorem.

\begin{theorem}[Localization of optimal rows and supports]\label{thm:rowlocal}
There exists an absolute constant $c>0$ such that every optimal row $r_n$ satisfies
\[
\boxed{r_n=2n+O\!\left(ne^{-c\Phi(n)}\right).}
\]
Moreover,
\[
\boxed{\delta(n)=O\!\left(ne^{-c\Phi(n)}\right)=o(n).}
\]
In particular, $r_n/n\to2$.
\end{theorem}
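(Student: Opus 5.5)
We prove Theorem~\ref{thm:rowlocal} by turning the lower-bound machinery behind Theorem~\ref{thm:main} into a bound that depends on both the row $r$ and the number of positions omitted. The aim is to show that if $r$ is far from $2n$, or if the support omits many positions, then the lcm exceeds the upper bound $\log a(n)\le \log\Lambda_{2n-2}=2n+O(ne^{-c\Phi(n)})$ by more than the error term. The upper bound comes from \eqref{eq:farhi} and the prime number theorem.

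\emph{Step 1 (a row-dependent lower bound).} Fix $r$ and $S\subseteq\Pcal_r$ with $|S|=n$, and let $K$ be the largest index used, so that $K\ge n-1$. Take a prime $p>\sqrt r$ and write $r=qp+s$ with $0\le s<p$. By Lucas' theorem, $p\mid\binom rk$ exactly when the last base-$p$ digit of $k$ exceeds $s$. Hence for $p$ in the band $r/(j+1)<p\le r/j$ we have $q=j$, and every $k$ with $s<k<p$ has $p\mid\binom rk$. Let $k_0$ be the least index with $k_0\ge1$ and $k_0\in S$. Then $p\mid\operatorname{lcm}(S)$ as soon as $S$ meets $(s,p)$. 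In particular this happens if $S$ contains all but a few indices below its top element $K$.

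I would parametrize by $K$ and by the prefix defect $\delta=K+1-n$, which counts the omitted positions in $[0,K]$. The argument then sums over bands $j\ge1$ as in Theorem~\ref{thm:main}. It shows that every prime $p\in(r/(j+1),r/j]$ with $s=r-jp<K-\delta'$ divides the lcm, for a suitable slack $\delta'$ determined by the omitted positions. Since $s<K$ amounts to $p>(r-K)/j$, the forced primes in band $j$ form an interval of length about $\min\{r/j-r/(j+1),\,(K-\delta)/j-\dots\}$. This interval telescopes to roughly
\[
F(r,K,\delta)\approx \min(r,\,\cdot)\ \text{ with }\ \sum_p\log p\approx r-\tfrac12(r-2K)_+ - O(\delta),
\]
and the Korobov--Vinogradov PNT gives errors $O(re^{-c\Phi(r)})$. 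The target inequality is
\[
\log\operatorname{lcm}(S)\ \ge\ 2n + c_1\,|r-2n| + c_1\,\delta - O\!\left(ne^{-c\Phi(n)}\right)
\]
for some $c_1>0$, at least for $r\le Cn$. Larger $r$ are handled trivially: $p\mid\binom rk$ for all primes $p\in(r/2,r]$ once $S$ contains some $k\ge1$ with $k<p$, and this gives $\log\operatorname{lcm}\ge r/2-o(r)$, which is far too large.

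\emph{Step 2 (conclusion).} Comparing with the upper bound gives $c_1|r_n-2n|+c_1\delta\le O(ne^{-c\Phi(n)})$. This yields both claims at once, after shrinking $c$, and $r_n/n\to2$ follows.

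\emph{Main obstacle.} The hard part is obtaining the \emph{linear} penalty in Step~1. The penalty for $r<2n-2$ is automatic, since $|\Pcal_r|\ge n$ forces $r\ge2n-2$. For $r>2n$, the point is that the half-row $\lfloor r/2\rfloor$ is larger than needed, so the forced intervals in the bands lengthen to total about $r$ rather than $2n$. Proving this requires showing that omitting positions cannot cancel the gain. In a single band, omitting the indices in $(s,p)$ requires removing about $p-s$ positions, which is costly relative to the saving $\log p$ summed over that band's primes. I would first try a pigeonhole argument: each omitted index $k$ can "protect" only the primes $p$ for which $(s_p,p)\cap S=\emptyset$. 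For the primes in band $j$, these sets are nested intervals, so $\delta$ omissions remove at most $O(\delta\cdot j)$ primes' worth of weight per band, summing to $O(\delta)$ in log-weight over $j\le\sqrt r$. This should give the $c_1\delta$ term with a constant that is small but positive.
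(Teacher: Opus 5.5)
There is a genuine gap. Everything rests on the Step~1 inequality, which you state as a target but do not prove, and the mechanism you sketch cannot deliver it. The inequality is in fact true for $r<4n$: it follows from Proposition~\ref{prop:refinedlower}, since $|r-2n|\le 2q+2$ and $\delta(S)\le q$ with $q=\lfloor r/2\rfloor+1-n$. But that is exactly the content that needs proof.

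\emph{First}, the forced intervals do not lengthen with $r$. The primes forced for \emph{every} support in band $j$ form an interval of length $(2n-j-3)/(j(j+1))$ by Corollary~\ref{cor:intervals}, independent of $r$. The surplus in your count, $r/2+K-2n\approx q+\delta$, exists only relative to a particular support, and it is precisely the mass that omitted positions can attack.

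\emph{Second}, this surplus is linear with constant $1$ and $\delta\le q$. So a loss bound $O(\delta)$ with unspecified constant is useless; you need the loss to be at most $C\delta$ with an explicit $C<2$. Your pigeonhole heuristic gives no such constant. Summing $O(\delta j)$ over $j\le\sqrt r$ gives $O(\delta r)$, not $O(\delta)$. The criterion you actually state (that $p$ divides the lcm once $S$ meets $(s_p,p)$) only shows that a prime of band $j$ absent from the lcm satisfies $(j+1)p-r-1\le\delta$. That is a mass of about $\delta/(j+1)$ per band, which sums to about $\delta\log J$. Improving this would require the multi-block structure of $\{k\le K: k\bmod p>s_p\}$ and a simultaneous-fit argument across all bands. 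None of this is present.

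There is also a concrete error for large $r$. For $p\in(r/2,r]$ and $1\le k<p$ one has $p\mid\binom rk$ if and only if $k>r-p$, not for every $k\ge1$ (e.g.\ $7\nmid\binom{10}{1}$). Moreover, $\log\operatorname{lcm}(S)\ge r/2-o(r)$ fails when $r$ is large compared with $n\log r$: for $n=2$, the entries $1,r$ have lcm $r$. Use Lemma~\ref{lem:largest} instead, as in Lemma~\ref{lem:localize}.

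The paper avoids global loss control altogether:
\begin{itemize}
\item It keeps the individually forced primes as a baseline of mass $2n-o(n)$, valid for every support.
\item It examines only the non-forced primes $(b,b+q]$ and $((r+1)/3,(r+q+1)/3]$ just below the forced intervals of bands $1$ and $2$. Their exclusion sets are nested intervals in fixed positions, $\{r-p+1,\dots,m\}$ and $\{r-2p+1,\dots,p-1\}$.
\item Both unions must fit in the same $q$-element complement $Q$. An interval-overlap argument then gives $h_1+h_2/3\le q+(2q-n)_+/6$, hence a surplus of at least $\min(q/3,n/6)$.
\item Non-forced primes in bands $j\ge3$ are simply not counted, so nothing there needs controlling.
\end{itemize}
Finally, no separate $\delta$-penalty is needed. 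Every $n$-element support in row $r$ has defect at most $d_r-n=q$, so $q_n=O(ne^{-c\Phi(n)})$ gives both conclusions at once.
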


The final part of the paper returns to exact finite structure. For a support $S\subseteq\{0,\ldots,\lfloor r/2\rfloor\}$, define
\[
\delta(S)=1+\max S-|S|,
\]
the number of missing indices between $0$ and $\max S$; thus a prefix has defect $0$. Let $\delta(n)$ be the minimum defect among supports attaining $a(n)$.

Exact certification gives two illustrative finite transitions: $n=15$ is the first non-prefix optimum, and at $n=41$ the minimum defect is already $3$. These are the only computer-assisted statements in the paper; the analytic results are independent of them.

The paper proceeds from local $p$-adic structure to global optimization and then returns briefly to finite support geometry. Section 3 gives the finite reduction, Section 4 develops prime-power exclusion, Section 5 proves asymptotic growth and localization, and Section 6 records the exact finite examples.

\section{Preliminaries}
\subsection{\texorpdfstring{$p$}{p}-adic valuations}
For a prime $p$, let $v_p(m)$ be the exponent of $p$ in $m$. Legendre's formula gives
\begin{equation}
v_p(m!)=\sum_{\nu\ge1}\left\lfloor\frac m{p^\nu}\right\rfloor,
\tag{2.1}
\end{equation}
and hence
\begin{equation}
v_p\binom rk
=\sum_{\nu\ge1}
\left(
\left\lfloor\frac r{p^\nu}\right\rfloor-
\left\lfloor\frac k{p^\nu}\right\rfloor-
\left\lfloor\frac{r-k}{p^\nu}\right\rfloor
\right).
\tag{2.2}\label{eq:vpbinom}
\end{equation}
We use Kummer's and Lucas' theorems in their standard forms.

\subsection{Distinct entries in a row}
By symmetry, $\binom rk=\binom r{r-k}$, and
\[
\frac{\binom r{k+1}}{\binom rk}=\frac{r-k}{k+1}>1
\]
for $0\le k<\lfloor r/2\rfloor$. Thus $\Pcal_r$ in \eqref{eq:Pr} is exactly the set of distinct entries and
\begin{equation}
d_r:=|\Pcal_r|=\left\lfloor\frac r2\right\rfloor+1.
\tag{2.3}
\end{equation}
In particular, a row can supply $n$ distinct entries only if
\begin{equation}
r\ge2n-2.
\tag{2.4}\label{eq:rmin}
\end{equation}

\subsection{The full-row least common multiple}
Let
\[
\Lambda_r=\lcmop_{0\le k\le r}\binom rk.
\]
By \eqref{eq:farhi},
\begin{equation}
\Lambda_r=\frac{\lcmop(1,2,\ldots,r+1)}{r+1}.
\tag{2.5}\label{eq:fullrow}
\end{equation}
Taking $p$-adic valuations gives
\begin{proposition}\label{prop:Ep}
For every prime $p$,
\[
\boxed{E_p(r)=\lfloor\log_p(r+1)\rfloor-v_p(r+1).}
\]
\end{proposition}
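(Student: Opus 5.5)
The plan is to read off $p$-adic valuations on both sides of Farhi's identity \eqref{eq:fullrow} and then pass from the full row to the half row by symmetry.

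\textbf{Step 1 (half row versus full row).} Since $\binom rk=\binom r{r-k}$, every entry of row $r$ already occurs among the indices $0\le k\le\lfloor r/2\rfloor$. Hence
\[
\max_{0\le k\le \lfloor r/2\rfloor} v_p\binom rk=\max_{0\le k\le r}v_p\binom rk .
\]
The valuation of an lcm is the maximum of the valuations of its arguments. So the common value above is exactly $v_p(\Lambda_r)$, and therefore $E_p(r)=v_p(\Lambda_r)$.

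\textbf{Step 2 (valuation of Farhi's quotient).} Apply $v_p$ to \eqref{eq:fullrow}:
\[
v_p(\Lambda_r)=v_p\bigl(\lcmop(1,\ldots,r+1)\bigr)-v_p(r+1).
\]
The first term is the largest exponent $\nu$ with $p^\nu\le r+1$. This holds because $p^\nu$ itself lies in $\{1,\ldots,r+1\}$, while no integer in that range is divisible by a higher power of $p$. That largest exponent is $\lfloor\log_p(r+1)\rfloor$, which gives the claimed formula.

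\textbf{Main obstacle.} The only delicate point is the floor of the logarithm when $r+1$ is itself a power of $p$, say $r+1=p^m$. Then $\lfloor\log_p(r+1)\rfloor=m$ exactly and $v_p(r+1)=m$, so the formula gives $E_p(r)=0$. This agrees with Lucas' theorem: every digit of $r=p^m-1$ in base $p$ is $p-1$, so $p$ divides no entry of the row.

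\textbf{Alternative route.} To avoid relying on \eqref{eq:farhi}, one could argue directly with Kummer's theorem. The valuation $v_p\binom rk$ counts the carries when adding $k$ and $r-k$ in base $p$. The maximum number of carries is the number of base-$p$ digits of $r$ below its top digit, excluding the trailing run of digits equal to $p-1$. That count is $\lfloor\log_p r\rfloor$ minus the length of the trailing run of $(p-1)$-digits, which matches $\lfloor\log_p(r+1)\rfloor-v_p(r+1)$. Since the paper takes \eqref{eq:farhi} as given, however, the two-step valuation argument above suffices.
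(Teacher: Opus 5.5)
Your proof is correct and is the paper's argument: the paper takes $p$-adic valuations of Farhi's identity \eqref{eq:fullrow}, and your Step 1 makes explicit the symmetry that identifies the half-row maximum $E_p(r)$ with $v_p(\Lambda_r)$. One slip in the optional Kummer sketch, which does not affect your main argument: when $r+1=p^m$ the expression $\lfloor\log_p r\rfloor-v_p(r+1)$ equals $-1$ rather than $0$, because the two floors differ exactly when $r+1$ is a power of $p$; that formula therefore needs a $\max(\cdot,0)$ or should be written with $\lfloor\log_p(r+1)\rfloor$ directly.
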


\subsection{Chebyshev functions}
Let
\[
\theta(x)=\sum_{p\le x}\log p,
\qquad
\psi(x)=\sum_{p^\nu\le x}\log p.
\]
Then
\[
\psi(x)=\log\lcmop(1,2,\ldots,\lfloor x\rfloor).
\]
We use the classical Korobov--Vinogradov zero-free-region consequence
\begin{equation}
\theta(x)=x+O\!\left(xe^{-c_0\Phi(x)}\right),
\qquad
\psi(x)=x+O\!\left(xe^{-c_0\Phi(x)}\right)
\tag{2.6}\label{eq:pnt}
\end{equation}
for some absolute $c_0>0$; see, for example, \cite[Ch.~6]{MontgomeryVaughan2006}.

\section{Basic bounds and finite reduction}
Recall
\[
A_r(n)=\min_{\substack{S\subseteq\mathcal P_r\\|S|=n}}\operatorname{lcm}(S),
\qquad
a(n)=\min_{r\ge2n-2}A_r(n).
\]
\begin{lemma}\label{lem:largest}
Every $n$-element subset of $\mathcal P_r$ contains an element at least $\binom r{n-1}$. Hence
\begin{equation}
A_r(n)\ge\binom r{n-1}.
\tag{3.1}
\end{equation}
\end{lemma}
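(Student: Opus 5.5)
The plan is to use the strict monotonicity of the distinct entries established in Section 2.2, together with a pigeonhole argument on indices. By that discussion, the map $k\mapsto\binom rk$ is strictly increasing on $0\le k\le\lfloor r/2\rfloor$. Every element of $\mathcal P_r$ is therefore $\binom rk$ for a unique index $k$ in this range, and a subset $S\subseteq\mathcal P_r$ corresponds bijectively to a set $K\subseteq\{0,\ldots,\lfloor r/2\rfloor\}$ of the same cardinality.

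If $|S|=n$, then $K$ consists of $n$ distinct nonnegative integers. By pigeonhole, $\max K\ge n-1$, since the integers $0,\ldots,n-2$ number only $n-1$. Set $m=\max K$. Then $n-1\le m\le\lfloor r/2\rfloor$, and strict monotonicity gives
\[
\binom rm\ge\binom r{n-1}.
\]
This proves the first assertion: $S$ contains an element at least $\binom r{n-1}$.

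For the bound $A_r(n)\ge\binom r{n-1}$, observe that the least common multiple of a finite set of positive integers is a common multiple of each of them. It is therefore at least as large as the largest element, so $\operatorname{lcm}(S)\ge\binom rm\ge\binom r{n-1}$ for every admissible $S$. Minimizing over all $S$ with $|S|=n$ yields the inequality.

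The only point needing care is the range condition $n-1\le\lfloor r/2\rfloor$, which is needed so that $\binom r{n-1}$ lies in the monotone half of the row. This holds automatically: $A_r(n)$ is defined only for $n\le d_r=\lfloor r/2\rfloor+1$, equivalently $r\ge 2n-2$ as in \eqref{eq:rmin}. There is no real obstacle.
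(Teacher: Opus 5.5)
Your proof is correct and follows essentially the same route as the paper: strict monotonicity of $k\mapsto\binom rk$ on the half-row forces the largest of $n$ selected entries to be at least $\binom r{n-1}$, and the LCM is at least its largest element. Your pigeonhole step on indices, and your check that $n-1\le\lfloor r/2\rfloor$ via $n\le d_r$, just spell out the paper's remark that the $n$th smallest entry is $\binom r{n-1}$.
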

\begin{proof}
The half-row entries are strictly increasing, so the $n$th smallest is $\binom r{n-1}$. The LCM is at least the largest selected element.
\end{proof}

Taking $r=2n-2$ and using \eqref{eq:fullrow} gives the two elementary bounds
\begin{equation}
\boxed{
\binom{2n-2}{n-1}
\le a(n)\le
\frac{\operatorname{lcm}(1,2,\ldots,2n-1)}{2n-1}.}
\tag{3.2}\label{eq:basicbounds}
\end{equation}
The same lower bound makes the global minimization finite.

\begin{theorem}[Finite row reduction]\label{thm:cutoff}
Suppose $U\ge a(n)$. If
\[
\binom r{n-1}>U,
\]
then row $r$ cannot attain $a(n)$. In particular, with
\[
U_n=\frac{\operatorname{lcm}(1,\ldots,2n-1)}{2n-1}
\]
and
\[
R(n)=\max\left\{r\ge2n-2:\binom r{n-1}\le U_n\right\},
\]
one has
\[
\boxed{a(n)=\min_{2n-2\le r\le R(n)}A_r(n).}
\]
\end{theorem}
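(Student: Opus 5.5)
The first assertion follows from Lemma~\ref{lem:largest}. If $\binom r{n-1}>U\ge a(n)$, then every $n$-element subset $S\subseteq\mathcal P_r$ satisfies
\[
\operatorname{lcm}(S)\ge A_r(n)\ge\binom r{n-1}>U\ge a(n).
\]
So no choice of $S$ in row $r$ reaches the value $a(n)$, and row $r$ cannot attain it. The only point to record is that the definition \eqref{eq:defa} restricts to $r\ge2n-2$ by \eqref{eq:rmin}, because $A_r(n)$ is defined only when $d_r\ge n$.

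For the second assertion, take $U=U_n$. This choice is admissible because $a(n)\le U_n$ by the upper bound in \eqref{eq:basicbounds}, which comes from using the full row $2n-2$ together with \eqref{eq:fullrow}. We then need to check that $R(n)$ is a well-defined finite integer at least $2n-2$.

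\emph{Nonempty.} At $r=2n-2$ we have $\binom{2n-2}{n-1}\le a(n)\le U_n$, by the lower bound in \eqref{eq:basicbounds} (equivalently, Lemma~\ref{lem:largest} applied to $r=2n-2$). So $r=2n-2$ belongs to the defining set.

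\emph{Bounded.} For $n\ge2$, $\binom r{n-1}\ge r$ is unbounded in $r$, so only finitely many $r$ satisfy $\binom r{n-1}\le U_n$. The case $n=1$ is trivial: $a(1)=1$ is attained at $r=0$, and with the convention $\binom r0=1\le U_1=1$ the maximum is not needed. I would handle it separately or assume $n\ge2$.

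\emph{Downward closed.} For fixed $k=n-1$, $\binom r{n-1}$ is nondecreasing in $r$. Hence every $r>R(n)$ has $\binom r{n-1}>U_n$, and by the first part none of these rows attains $a(n)$. Every row $r$ in $[2n-2,R(n)]$ is included in the minimum anyway, so we do not need to know that these intermediate rows satisfy the inequality.

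Finally, $a(n)=\min_{r\ge2n-2}A_r(n)$ is attained: the values $A_r(n)$ are positive integers, so the infimum is a minimum. By the previous paragraph it is attained at some $r\le R(n)$, which gives the boxed identity. There is no real obstacle here. The only care needed is the degenerate case $n=1$ and ensuring $R(n)$ is finite, both of which I have addressed above.
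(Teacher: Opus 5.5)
Your proof is correct and follows the paper's argument: Lemma~\ref{lem:largest} gives $A_r(n)\ge\binom r{n-1}>U\ge a(n)$, and $U=U_n$ is admissible by the upper bound in \eqref{eq:basicbounds}. The extra checks the paper leaves implicit are sound: nonemptiness, finiteness of $R(n)$ for $n\ge2$, and attainment of the minimum. So is your observation that $U_1=1=\binom r0$ for every $r$, which leaves $R(1)$ undefined and forces $n=1$ to be handled separately (your monotonicity step is harmless but unnecessary, since any $r>R(n)$ lies outside the defining set by the definition of the maximum).
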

\begin{proof}
By Lemma \ref{lem:largest}, $A_r(n)\ge\binom r{n-1}$. The special choice $U=U_n$ is admissible by \eqref{eq:basicbounds}.
\end{proof}

Theorem \ref{thm:cutoff} is used twice: asymptotically, to constrain the location of an optimal row, and computationally, to turn every finite claim into an exact finite certificate. The certification algorithm itself is postponed to Section \ref{sec:finite}, where it is needed.

\section{Prime-power exclusion}
We now prove Theorem \ref{thm:exclusion}.

\begin{lemma}\label{lem:digitalvp}
For every prime $p$ and $0\le k\le r$,
\[
\boxed{
v_p\binom rk
=\sum_{\nu\ge1}\1_{\{k\bmod p^\nu>r\bmod p^\nu\}}.}
\]
\end{lemma}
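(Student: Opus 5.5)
I will derive the identity directly from Legendre's formula in the form \eqref{eq:vpbinom}. The plan is to show, for each fixed $\nu\ge1$, that
\[
\left\lfloor\frac r{p^\nu}\right\rfloor-\left\lfloor\frac k{p^\nu}\right\rfloor-\left\lfloor\frac{r-k}{p^\nu}\right\rfloor=\1_{\{k\bmod p^\nu>r\bmod p^\nu\}},
\]
and then sum over $\nu$. Since $0\le k\le r$, the number $r-k$ is a nonnegative integer, so every term is well defined. For all $\nu$ with $p^\nu>r$, both sides vanish: the floors are $0$, and $k\bmod p^\nu=k\le r=r\bmod p^\nu$. The sum is therefore finite, and the termwise identity suffices.

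For the termwise identity, write $m=p^\nu$ and divide with remainder: $r=qm+\rho$ and $k=q'm+\kappa$, with $0\le\rho,\kappa<m$. Then $r-k=(q-q')m+(\rho-\kappa)$, and I split into two cases. If $\kappa\le\rho$, then $0\le\rho-\kappa<m$, so $\lfloor (r-k)/m\rfloor=q-q'$ and the bracket is $0$. If $\kappa>\rho$, then $-m<\rho-\kappa<0$, so $\lfloor (r-k)/m\rfloor=q-q'-1$ and the bracket is $1$. In both cases the bracket equals the indicator of $\kappa>\rho$, that is, of $k\bmod p^\nu>r\bmod p^\nu$.

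This result is essentially Kummer's theorem in its carry-counting form. The indicator records exactly whether a borrow passes beyond digit position $\nu-1$ when $k$ is subtracted from $r$ in base $p$. One could instead cite Kummer's theorem and translate carries into the comparison of residues, but the floor computation above is self-contained. There is no real obstacle here. The only point needing care is the second case: there the remainder $\rho-\kappa$ is negative, so the quotient of $r-k$ drops by exactly one.
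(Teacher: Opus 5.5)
Your proof is correct and follows the paper's own argument: you fix $p^\nu$, divide $r$ and $k$ by it with remainder, and observe that the Legendre summand in \eqref{eq:vpbinom} is $0$ or $1$ according as the remainder of $k$ is at most, or exceeds, that of $r$. Your remark that all terms with $p^\nu>r$ vanish, so the sum is finite, is a harmless detail that the paper leaves implicit.
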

\begin{proof}
Fix $q=p^\nu$, and write $r=aq+s$ and $k=bq+t$ with $0\le s,t<q$. The corresponding summand of \eqref{eq:vpbinom} is
\[
a-b-\left\lfloor\frac{r-k}{q}\right\rfloor.
\]
If $t\le s$, then $r-k=(a-b)q+(s-t)$ and the summand is $0$. If $t>s$, then $r-k=(a-b-1)q+(q+s-t)$ and the summand is $1$. Thus the summand is exactly $\1_{\{t>s\}}$. Summing over $\nu$ gives the formula.
\end{proof}

\begin{proof}[Proof of Theorem \ref{thm:exclusion}]
For exponents $0\le e_p\le E_p(r)$, put $M=\prod_{p\le r}p^{e_p}$. Then
\[
\binom rk\mid M
\quad\Longleftrightarrow\quad
v_p\binom rk\le e_p\ \text{for every }p\le r
\quad\Longleftrightarrow\quad
k\notin\bigcup_{p\le r}H_{p,e_p}(r).
\]
Hence the number of distinct half-row coefficients dividing $M$ is exactly
\[
d_r-\left|\bigcup_{p\le r}H_{p,e_p}(r)\right|.
\]
Every admissible exponent vector therefore gives an integer divisible by at least $n$ distinct entries.

Conversely, if $S$ is an $n$-element support and $L=\lcmop\{\binom rk:k\in S\}$, then $e_p=v_p(L)$ gives an admissible exponent vector, none of the indices in $S$ lies in the union of the corresponding exclusion sets, and $L=\prod p^{e_p}$. The two minima coincide.
\end{proof}

\begin{remark}
Theorem \ref{thm:exclusion} separates multiplicative cost from combinatorial loss. Lowering the allowed exponent of $p$ removes precisely the indices in $H_{p,e}(r)$; several such choices combine by taking a union. This is the mechanism behind the holes in optimal supports.
\end{remark}

\begin{example}[The first hole]\label{ex:n15}
Row $34$ has $18$ distinct entries. Its full-row LCM is $\Lambda_{34}$, but lowering the $3$-adic budget from $3$ to $2$ excludes
\[
H_{3,2}(34)=\{8,17\},
\]
while removing the factor $19$ excludes
\[
H_{19,0}(34)=\{16,17\}.
\]
The union has only three indices,
\[
\{8,16,17\},
\]
so the remaining $15$ coefficients all divide
\[
\frac{\Lambda_{34}}{3\cdot19}=72\,382\,733\,280.
\]
Equivalently, the surviving support is
\[
\{0,1,\ldots,7,9,10,\ldots,15\}.
\]
Section \ref{sec:finite} will certify that this candidate is globally optimal and that $n=15$ is the first value for which no prefix is optimal. The present example isolates the arithmetic mechanism: one strategically omitted coefficient allows an expensive prime-power requirement to disappear.
\end{example}

\begin{figure}[htbp]
\centering
\begin{tikzpicture}[x=0.53cm,y=0.62cm,font=\scriptsize]
  \foreach \k in {0,...,17}{
    \draw (\k,0) circle (0.16);
    \node[below=3pt] at (\k,0) {$\k$};
  }
  \foreach \k in {8,16,17}{\fill[gray!70] (\k,0) circle (0.16);}

  \node[anchor=east] at (-0.55,1.15) {$H_{3,2}$};
  \foreach \k in {8,17}{
    \draw (\k,1.15) circle (0.12);
    \fill[gray!55] (\k,1.15) circle (0.12);
    \draw[dashed,gray] (\k,1.02) -- (\k,0.19);
  }
  \node[anchor=east] at (-0.55,1.85) {$H_{19,0}$};
  \foreach \k in {16,17}{
    \draw (\k,1.85) circle (0.12);
    \fill[gray!55] (\k,1.85) circle (0.12);
    \draw[dashed,gray] (\k,1.72) -- (\k,0.19);
  }
  \node[anchor=west,align=left] at (0,-0.95)
    {open circles: retained indices \qquad filled circles: excluded indices};
\end{tikzpicture}
\caption{Prime-power exclusion in the first non-prefix optimum.  In row $34$, $H_{3,2}=\{8,17\}$ and $H_{19,0}=\{16,17\}$ overlap at index $17$.  Thus two multiplicative savings cost only three indices in total.  This overlap is what makes the non-prefix support competitive.}
\label{fig:n15exclusion}
\end{figure}

Figure \ref{fig:n15exclusion} is a small instance of the general principle behind Theorem \ref{thm:exclusion}: multiplicative savings are governed not only by the sizes of exclusion sets, but also by how strongly those sets overlap.

\section{Asymptotic growth}
We now prove Theorem \ref{thm:main}. The argument has two layers. Large primes divide Pascal coefficients according to particularly simple intervals of indices. Some corresponding exclusion sets are too large for an $n$-element support to avoid, so those primes are forced into every admissible LCM; their logarithms supply the main term $2n$. For localization we then use two adjacent prime bands in which individual primes may be omitted, but not too many simultaneously because their exclusion intervals overlap rigidly. The first layer determines the size of $a(n)$; the second determines where an optimum can occur.

\subsection{A coarse row bound}
\begin{lemma}\label{lem:localize}
For all sufficiently large $n$, every row $r$ satisfying $A_r(n)=a(n)$ satisfies
\[
2n-2\le r<4n.
\]
\end{lemma}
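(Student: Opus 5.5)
The lower bound $r\ge 2n-2$ is just \eqref{eq:rmin}. For the upper bound I will use the finite row reduction, Theorem \ref{thm:cutoff}. Any row attaining $a(n)$ satisfies $\binom r{n-1}\le A_r(n)=a(n)\le U_n$. So it suffices to show that $\binom{r}{n-1}>U_n$ for every $r\ge 4n$ once $n$ is large. Since $\binom r{n-1}$ is increasing in $r$, I only need the single inequality
\[
\binom{4n}{n-1}>U_n=\frac{\operatorname{lcm}(1,\ldots,2n-1)}{2n-1}.
\]

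For the right-hand side I take logarithms: $\log U_n\le \psi(2n-1)$. By \eqref{eq:pnt}, or already by Chebyshev's elementary estimate $\psi(x)\le (\log 4)\,x$ (equivalently $\operatorname{lcm}(1,\ldots,m)<4^m$, a classical result of Hanson), this gives $\log U_n\le 2n(1+o(1))$, or even $\log U_n\le (2\log 4)\,n\approx 2.77n$ unconditionally.

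For the left-hand side I will use the entropy form of Stirling's formula. With $H(t)=-t\log t-(1-t)\log(1-t)$,
\[
\log\binom{4n}{n-1}=4n\,H(1/4)-O(\log n).
\]
Here $4H(1/4)=\log 4+3\log(4/3)=\log(256/27)\approx 2.249$. This exceeds $2$ by a fixed margin, so $\log\binom{4n}{n-1}\ge 2.24\,n$ for large $n$, while $\log U_n\le 2n+o(n)$. Hence $\binom{4n}{n-1}>U_n$ for all sufficiently large $n$, and no row $r\ge 4n$ can be optimal.

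The main obstacle is only checking the constant. The crude Chebyshev bound $2.77n$ is not good enough to beat $2.249n$, so I need $\psi(x)\sim x$, i.e.\ the prime number theorem in the form \eqref{eq:pnt} already quoted in the preliminaries. For the $O(\log n)$ correction I will use the elementary bound $\binom{m}{k}\ge \frac{1}{m+1}e^{mH(k/m)}$ with $m=4n$ and $k=n-1$. I will also note that replacing $1/4$ by $(n-1)/(4n)$ changes the exponent by only $O(\log n)$, since $H'(1/4)=\log 3$ is bounded. This makes the inequality, and hence the lemma, hold for all sufficiently large $n$.
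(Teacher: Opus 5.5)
Your proposal is correct and follows essentially the same route as the paper. The paper compares the PNT upper bound $\log a(n)\le\psi(2n-1)-\log(2n-1)=2n+o(n)$ with the Stirling estimate $\log\binom{4n}{n-1}=(4\log4-3\log3)n+O(\log n)$, where $4\log 4-3\log 3=\log(256/27)\approx2.249>2$, exactly as you do via $A_r(n)\ge\binom r{n-1}$, monotonicity in $r$, and the entropy form of Stirling.
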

\begin{proof}
The lower bound is \eqref{eq:rmin}. For the upper bound, \eqref{eq:basicbounds} and \eqref{eq:pnt} give
\[
\log a(n)\le\psi(2n-1)-\log(2n-1)=2n+o(n).
\]
On the other hand, if $r\ge4n$, then by Lemma \ref{lem:largest},
\[
A_r(n)\ge\binom{4n}{n-1}.
\]
Stirling's formula yields
\[
\log\binom{4n}{n-1}=(4\log4-3\log3)n+O(\log n),
\]
and $4\log4-3\log3=2.24934\ldots>2$, a contradiction for large $n$.
\end{proof}

\subsection{Forced primes}
\begin{lemma}[Forced-prime count]\label{lem:forced}
Let $p>\sqrt r$ be an odd prime and write
\[
r=jp+s,\qquad0\le s<p.
\]
Then the number of distinct entries of row $r$ that are not divisible by $p$ is
\begin{equation}
\boxed{\left\lceil\frac{(j+1)(s+1)}2\right\rceil.}
\tag{5.1}\label{eq:halfcount}
\end{equation}
Consequently, $p$ divides the LCM of every $n$-element subset of $\Pcal_r$ if and only if
\begin{equation}
\boxed{(j+1)(s+1)\le2n-2.}
\tag{5.2}\label{eq:forcedcondition}
\end{equation}
\end{lemma}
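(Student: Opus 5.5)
Since $p>\sqrt r$, we have $p^2>r$, so $r$ has exactly two base-$p$ digits: $r=jp+s$ with $0\le s<p$ and $j<p$. By Lucas' theorem (or by Lemma \ref{lem:digitalvp} with only $\nu=1$ contributing, since $k\bmod p^\nu=k$ and $r\bmod p^\nu=r$ for $\nu\ge2$), $p\nmid\binom rk$ if and only if $k=bp+t$ with $0\le b\le j$ and $0\le t\le s$. The full row therefore contains exactly $(j+1)(s+1)$ indices $k\in\{0,\ldots,r\}$ with $p\nmid\binom rk$.

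\textbf{Passing to the half row.} Next I count how many of these indices lie in $\{0,\ldots,\lfloor r/2\rfloor\}$. The map $k\mapsto r-k$ preserves the set of non-divisible indices: if $k=bp+t$ is one of them, then $r-k=(j-b)p+(s-t)$ is another. So the good set is symmetric about $r/2$. The fixed point $k=r/2$ occurs only when $r$ is even, and it is good if and only if $b=j/2$ and $t=s/2$, i.e.\ exactly when both $j$ and $s$ are even, which is exactly when $(j+1)(s+1)$ is odd. (If $r$ is even but $j,s$ are odd, the centre $k=r/2=\tfrac{j-1}{2}p+\tfrac{p+s}{2}$ has low digit $>s$, so it is not good; this uses that $p$ is odd.) Conversely, if $(j+1)(s+1)$ is odd then $j,s$ are even, so $r=jp+s$ is even and the centre is good. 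Hence the number of good $k\le\lfloor r/2\rfloor$ is $\lceil (j+1)(s+1)/2\rceil$. Since the half-row entries are pairwise distinct, by \S2.2, this number equals the number of distinct entries not divisible by $p$, which proves \eqref{eq:halfcount}.

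\textbf{The forcing criterion.} The prime $p$ divides the LCM of every $n$-subset of $\Pcal_r$ if and only if no $n$ distinct entries avoid $p$. This happens exactly when $\lceil (j+1)(s+1)/2\rceil\le n-1$. For an integer $m$, the condition $\lceil m/2\rceil\le n-1$ is equivalent to $m\le 2n-2$, which gives \eqref{eq:forcedcondition}.

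The main obstacle I expect is the parity bookkeeping at the centre of the row. In particular, one must check that the centre index is good exactly when $j$ and $s$ are both even, and this is where the hypothesis that $p$ is odd enters.
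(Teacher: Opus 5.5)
Your proof is correct and follows the paper's argument essentially step for step. Lucas' theorem on the two-digit base-$p$ expansion gives $(j+1)(s+1)$ $p$-free indices. The involution $k\mapsto r-k$ then pairs these off, with the central index handled by parity; your uniqueness-of-digits treatment of the fixed point is a slightly more explicit form of the paper's both-even/both-odd case split. Finally, $\lceil T/2\rceil<n \iff T\le 2n-2$ gives the forcing criterion.
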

\begin{proof}
Since $p>\sqrt r$, one has $r<p^2$, so $r=jp+s$ is the base-$p$ expansion of $r$. Writing $k=up+v$, Lucas' theorem gives
\[
p\nmid\binom rk
\quad\Longleftrightarrow\quad
u\le j\ \text{and}\ v\le s.
\]
Thus the full row has
\[
T=(j+1)(s+1)
\]
$p$-free indices. They are paired by $k\leftrightarrow r-k$, except possibly for the central index when $r$ is even. If $j,s$ are both even, then $T$ is odd and $r/2=(j/2)p+s/2$ is $p$-free; if $j,s$ are both odd, then $T$ is even and the lower base-$p$ digit $(p+s)/2$ of $r/2$ exceeds $s$, so the central coefficient is divisible by $p$. Hence in every case the half-row contains exactly $\lceil T/2\rceil$ $p$-free entries, proving \eqref{eq:halfcount}.

The prime $p$ is forced precisely when fewer than $n$ distinct entries are $p$-free. Since $T$ is integral,
\[
\left\lceil\frac T2\right\rceil<n
\quad\Longleftrightarrow\quad
T\le2n-2,
\]
which is \eqref{eq:forcedcondition}.
\end{proof}

\begin{corollary}[Forced intervals]\label{cor:intervals}
Let $r\ge2n-2$ and $j\ge1$. Put
\[
U_j=\frac rj,
\qquad
L_j=\frac{r+1}{j}-\frac{2n-2}{j(j+1)}.
\]
If $p>\sqrt r$ is prime and $L_j\le p\le U_j$, then $p$ divides every admissible $n$-element LCM in row $r$. Moreover
\begin{equation}
L_j>\frac r{j+1}
\tag{5.3}\label{eq:bandleft}
\end{equation}
and
\begin{equation}
\boxed{U_j-L_j=\frac{2n-j-3}{j(j+1)}.}
\tag{5.4}\label{eq:bandlength}
\end{equation}
\end{corollary}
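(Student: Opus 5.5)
Everything reduces to Lemma~\ref{lem:forced}. The plan is to show that a prime $p>\sqrt r$ with $L_j\le p\le U_j$ has base-$p$ expansion $r=jp+s$, with the same $j$ as the band index, and that its digits satisfy \eqref{eq:forcedcondition}. The inequality \eqref{eq:bandleft} and the length formula \eqref{eq:bandlength} are then direct algebra.

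First, identify the digits. Since $p\le U_j=r/j$, we have $jp\le r$. To get $r<(j+1)p$, we use $p\ge L_j>r/(j+1)$, which is exactly \eqref{eq:bandleft}, so we prove that inequality first. Multiplying by $j(j+1)$, \eqref{eq:bandleft} is equivalent to
\[
(j+1)(r+1)-(2n-2)>jr,
\]
that is,
\[
r+j+1>2n-2.
\]
This holds because $r\ge 2n-2$ and $j\ge1$. Hence $\lfloor r/p\rfloor=j$ and $s=r-jp\in[0,p)$. Since $p>\sqrt r$, this is the base-$p$ expansion used in Lemma~\ref{lem:forced}.

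Next, verify the forcing condition. We need
\[
(j+1)(r-jp+1)\le 2n-2,
\qquad\text{i.e.}\qquad
p\ge\frac{r+1}{j}-\frac{2n-2}{j(j+1)}=L_j,
\]
which is exactly the hypothesis. Lemma~\ref{lem:forced} then says that $p$ divides the LCM of every $n$-element subset of $\Pcal_r$. The lemma is stated for odd $p$; the case $p=2$ can only occur when $r<4$, which is trivial or vacuous. We will note this, or observe that the count argument in the lemma's proof works for $p=2$ as well.

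Finally, the length computation is
\[
U_j-L_j=\frac{r}{j}-\frac{r+1}{j}+\frac{2n-2}{j(j+1)}=\frac{-(j+1)+2n-2}{j(j+1)}=\frac{2n-j-3}{j(j+1)},
\]
which is \eqref{eq:bandlength}.

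There is no real obstacle in this argument. The only delicate point is making sure the band index $j$ agrees with the leading digit $\lfloor r/p\rfloor$, and that is precisely what \eqref{eq:bandleft} guarantees. For this reason \eqref{eq:bandleft} should be proved before the forcing claim.
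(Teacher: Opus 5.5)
Your proof is correct and follows the paper's argument. You use $r/(j+1)<L_j\le p\le r/j$ to get $j=\lfloor r/p\rfloor$, rewrite the forcing condition (5.2) as $p\ge L_j$, and obtain (5.3) and (5.4) by direct algebra. The only differences are minor: you prove (5.3) before using it, whereas the paper uses it implicitly and checks it afterwards, and you explicitly handle $p=2$, which Lemma~\ref{lem:forced} formally excludes.
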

\begin{proof}
Within $r/(j+1)<p\le r/j$ one has $j=\lfloor r/p\rfloor$ and $s=r-jp$. Condition \eqref{eq:forcedcondition} becomes
\[
(j+1)(r-jp+1)\le2n-2,
\]
which is equivalent to $p\ge L_j$. Also
\[
L_j-\frac r{j+1}=\frac{r+j+3-2n}{j(j+1)}\ge\frac{j+1}{j(j+1)}>0
\]
by $r\ge2n-2$. Finally, direct subtraction gives \eqref{eq:bandlength}.
\end{proof}

The constant $2$ in Theorem \ref{thm:main} is not an artifact of the complete-row upper bound; it is already encoded in the forced-prime geometry. Ignoring lower-order terms, the $j$th band contributes a forced interval of length
\[
\frac{2n}{j(j+1)},
\]
and
\[
\sum_{j\ge1}\frac{1}{j(j+1)}=1.
\]
In other words, the prime bands partition an asymptotic logarithmic mass of $2n$: each band contributes its share, and the shares telescope to the full main term. The proof below makes this principle uniform while allowing the number of bands to grow with $n$.

\begin{figure}[htbp]
\centering
\begin{tikzpicture}[x=0.88cm,y=0.9cm,font=\small,
  endpoint/.style={font=\scriptsize,inner sep=1pt},
  bandlabel/.style={font=\small,anchor=east},
  interval/.style={line width=6pt,line cap=butt}]
  \def\yA{1.65}
  \node[bandlabel] at (-0.55,\yA) {first band};
  \draw[thin] (0,\yA) -- (10.6,\yA);
  \draw[interval,gray!35] (1.55,\yA) -- (4.45,\yA);
  \draw[interval,gray!72] (6.05,\yA) -- (10.6,\yA);
  \node[font=\scriptsize] at (3.00,\yA) {may be omitted};
  \node[font=\scriptsize,text=white] at (8.32,\yA) {forced};
  \foreach \x in {0,1.55,4.45,6.05,10.6} {\draw (\x,\yA-0.11)--(\x,\yA+0.11);}
  \node[endpoint,below=3pt] at (0,\yA) {$r/2$};
  \node[endpoint,below=3pt] at (1.55,\yA) {$b$};
  \node[endpoint,below=3pt] at (4.45,\yA) {$b+q$};
  \node[endpoint,below=3pt] at (6.05,\yA) {$L_1$};
  \node[endpoint,below=3pt] at (10.6,\yA) {$r$};

  \def\yB{-0.15}
  \node[bandlabel] at (-0.55,\yB) {second band};
  \draw[thin] (0,\yB) -- (10.6,\yB);
  \draw[interval,gray!35] (1.55,\yB) -- (4.45,\yB);
  \draw[interval,gray!72] (6.05,\yB) -- (10.6,\yB);
  \node[font=\scriptsize] at (3.00,\yB) {may be omitted};
  \node[font=\scriptsize,text=white] at (8.32,\yB) {forced};
  \foreach \x in {0,1.55,4.45,6.05,10.6} {\draw (\x,\yB-0.11)--(\x,\yB+0.11);}
  \node[endpoint,below=3pt] at (0,\yB) {$r/3$};
  \node[endpoint,below=3pt] at (1.55,\yB) {$(r+1)/3$};
  \node[endpoint,below=3pt] at (4.45,\yB) {$(r+q+1)/3$};
  \node[endpoint,below=3pt] at (6.05,\yB) {$L_2$};
  \node[endpoint,below=3pt] at (10.6,\yB) {$r/2$};
\end{tikzpicture}
\caption{The first two large-prime bands (schematic, not to scale). Dark intervals are individually forced by Corollary \ref{cor:intervals}. Light intervals contain primes that a support may omit only when the corresponding exclusion sets fit inside its $q$ omitted positions. The two light masses cannot be removed simultaneously; this is the source of the localization gain.}
\label{fig:twobands}
\end{figure}

The dark intervals in Figure \ref{fig:twobands} drive Theorem \ref{thm:main}; the incompatibility of the two light intervals drives Theorem \ref{thm:rowlocal}.

\subsection{Proof of the asymptotic theorem}
\begin{proof}[Proof of Theorem \ref{thm:main}]
For the upper bound, row $2n-2$ and \eqref{eq:fullrow} give
\[
a(n)\le\frac{\lcmop(1,\ldots,2n-1)}{2n-1},
\]
so by \eqref{eq:pnt},
\begin{equation}
\log a(n)\le2n+O\!\left(ne^{-c_1\Phi(n)}\right).
\tag{5.5}\label{eq:uppermain}
\end{equation}

For the lower bound, let $r$ be an optimal row. By Lemma \ref{lem:localize}, $2n-2\le r<4n$ for sufficiently large $n$. Choose a sufficiently small fixed $\eta>0$ and set
\[
J=\left\lfloor e^{\eta\Phi(n)}\right\rfloor.
\]
Then $J=n^{o(1)}$, hence $J<\sqrt r-1$ for large $n$. For $1\le j\le J$, let $I_j=(L_j,U_j]$. By \eqref{eq:bandleft},
\[
L_j>\frac r{j+1}\ge\frac r{J+1}>\sqrt r
\]
for large $n$. Every prime in $I_j$ is therefore covered by Lemma \ref{lem:forced} and divides $a(n)$. The intervals lie in disjoint bands, so
\begin{equation}
\log a(n)\ge\sum_{j=1}^J\bigl(\theta(U_j)-\theta(L_j)\bigr).
\tag{5.6}\label{eq:thetasum}
\end{equation}

By \eqref{eq:bandlength},
\[
\sum_{j=1}^J(U_j-L_j)
=\sum_{j=1}^J\frac{2n-j-3}{j(j+1)}
=(2n-2)\frac J{J+1}-H_J,
\]
where $H_J$ is the $J$th harmonic number. Thus
\begin{equation}
\sum_{j=1}^J(U_j-L_j)=2n+O\!\left(\frac nJ+\log J\right).
\tag{5.7}
\end{equation}

Every endpoint in \eqref{eq:thetasum} lies between $(2n-2)/(J+1)$ and $4n$. Since $\log J=O(\Phi(n))=o(\log n)$, one has $\Phi(x)=(1+o(1))\Phi(n)$ uniformly throughout this range. Hence, after decreasing the positive constant if necessary, \eqref{eq:pnt} gives
\[
\theta(x)=x+O\!\left(xe^{-c_2\Phi(n)}\right)
\]
uniformly at all endpoints. As $L_j<U_j=r/j$ and $r<4n$,
\[
\sum_{j\le J}(U_j+L_j)=O(n\log J),
\]
so the total PNT error is $O(ne^{-c_3\Phi(n)})$. Also $n/J=O(ne^{-\eta\Phi(n)})$, while $\log J$ is negligible on this scale. Therefore
\begin{equation}
\log a(n)\ge2n-O\!\left(ne^{-c_4\Phi(n)}\right).
\tag{5.8}\label{eq:lowermain}
\end{equation}
Combining \eqref{eq:uppermain} and \eqref{eq:lowermain} proves the theorem.
\end{proof}

\subsection{A two-band refinement and localization of optimal rows}
We now use information that was deliberately discarded in the proof of Theorem \ref{thm:main}: primes that are not individually forced can still fail to be omitted simultaneously, because their exclusion sets must all fit inside the same set of omitted indices.

Let $r$ satisfy $2n-2\le r<4n$, put
\[
m=\left\lfloor\frac r2\right\rfloor,
\qquad
q=m+1-n,
\qquad
b=\left\lceil\frac r2\right\rceil,
\]
and let $S\subseteq\{0,1,\ldots,m\}$ have $|S|=n$. Write
\[
Q=\{0,1,\ldots,m\}\setminus S,
\qquad |Q|=q.
\]
For a prime $p$, let
\[
H_p=\left\{0\le k\le m:p\mid\binom rk\right\}.
\]
Then $p$ is absent from the LCM associated with $S$ if and only if $H_p\subseteq Q$.

For any nested family $H_s\subseteq H_t$ when $s\le t$, the members contained in a fixed set $Q$ form an initial segment; hence their union is the largest one. We use this elementary observation below.

The first two prime bands contain particularly simple nested families of exclusion sets. The next lemma records their exact geometry. Since it is used only in the asymptotic argument, we assume $n\ge4$; then every prime in the second band below is odd.

\begin{lemma}[Geometry of the first two bands]\label{lem:twobands}
Assume $n\ge4$ and $2n-2\le r<4n$. Let $S,Q,m,q,b$ be as above.

In the first band, consider primes
\[
b<p\le b+q.
\]
For such a prime,
\[
H_p=\{r-p+1,\ldots,m\},
\qquad |H_p|=p-b.
\]
Let $h_1$ be the largest value of $|H_p|$ among primes in this interval that are absent from the LCM of $S$, and put $h_1=0$ if there is no such prime.

In the second band, consider primes
\[
\frac{r+1}{3}<p\le\frac{r+q+1}{3}.
\]
For such a prime,
\[
H_p=\{r-2p+1,\ldots,p-1\},
\qquad |H_p|=3p-r-1.
\]
Let $h_2$ be the largest value of $|H_p|$ among primes in this interval that are absent from the LCM of $S$, and put $h_2=0$ if there is no such prime.

Then
\begin{equation}
\boxed{
 h_1+\frac{h_2}{3}
 \le
 q+\frac{(2q-n)_+}{6},
}
\tag{5.9}\label{eq:twobandgeometry}
\end{equation}
where $(x)_+=\max(x,0)$.
\end{lemma}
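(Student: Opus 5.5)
The plan is to compute the exclusion sets $H_p$ in both bands exactly with Lucas' theorem. Each band then gives a nested family, so by the nested-family observation made before the lemma, only the largest absent member matters. Finally, two explicit intervals must fit inside the $q$-element set $Q$, which gives the inequality.

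\emph{First band.} Let $b<p\le b+q$. Since $b\ge r/2$ and $b+q\le r$ (using $n\ge1$), we have $r/2<p\le r$, so $r=1\cdot p+s$ with $s=r-p<p$. Every $k\le m$ satisfies $k<p$, so its base-$p$ expansion is the single digit $k$. Lucas' theorem then gives $p\nmid\binom rk$ iff $k\le r-p$. Hence $H_p=\{r-p+1,\dots,m\}$. Since $m+b=r$, its size is $m-r+p=p-b$. These sets increase with $p$. So if $h_1>0$, the union of the absent first-band exclusion sets is
\[
A=\{m-h_1+1,\dots,m\}\subseteq Q .
\]

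\emph{Second band.} Let $(r+1)/3<p\le(r+q+1)/3$. We need $p\le r/2$; this follows from $q\le m+1-n$ and $n\ge4$. Then $r=2p+s$ with $0\le s=r-2p<p$, and $p>r/3\ge 2$ makes $p$ odd. For $k<p$, Lucas gives $p\nmid\binom rk$ iff $k\le s$. For $p\le k\le m$, write $k=p+(k-p)$; then $p\nmid\binom rk$ iff $k-p\le s$, i.e. $k\le r-p$, which always holds because $m\le r-p$. Also $p-1\le m$. Therefore $H_p=\{r-2p+1,\dots,p-1\}$, of size $3p-r-1$. These intervals are nested increasing in $p$: the left end decreases and the right end increases. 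So if $h_2>0$, the union of the absent second-band sets is $B=H_{p^*}\subseteq Q$, where $3p^*-r-1=h_2$.

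\emph{Fitting $A\cup B$ inside $Q$.} If $h_1=0$ or $h_2=0$, the bound follows from $h_i\le q$, since $h_2/3\le q$. Otherwise split on whether $A$ and $B$ meet.

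If they are disjoint, then $h_1+h_2\le|Q|=q$. This already gives $h_1+h_2/3\le q$.

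If they meet, then $A\cup B$ contains the whole interval $[r-2p^*+1,m]$, because $A$ ends at $m$ and $B$ reaches into $A$. This interval has size $m-r+2p^*=2p^*-b$. We therefore get two constraints, $h_1\le q$ and $2p^*-b\le q$. Substituting $p^*=(r+1+h_2)/3$ into the second gives
\[
h_2\le\tfrac{3q+3b-2r-2}{2}.
\]
Using $r=m+b$ and $q=m+1-n$, the numerator is $3q+3b-2r-2=m+b+1-3n$. Since $b\le m+1$, this is at most $2m+2-3n=2q-n$. Hence
\[
\frac{h_2}{3}\le\frac{2q-n}{6}\le\frac{(2q-n)_+}{6}.
\]
Adding $h_1\le q$ gives \eqref{eq:twobandgeometry}.

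The main obstacle is the overlapping case. Here $B$ may even lie inside $A$, so the naive bound $h_1+h_2\le q$ fails. The key point is that overlap forces the longer interval $[r-2p^*+1,m]$ into $Q$ in every configuration. The other step needing care is checking that the second-band primes satisfy $p\le r/2$ and $p-1\le m$, so that the Lucas computation is valid throughout.
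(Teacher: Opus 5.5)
Your proposal is correct and follows essentially the same route as the paper. Both arguments compute the two nested families $H_p$ by Lucas' theorem, reduce each band to a single interval ($A$ or $B$) inside $Q$, and split into the disjoint and overlapping cases. The only difference is that you handle the overlap uniformly through $[\ell,m]\subseteq A\cup B\subseteq Q$, where the paper separates the sub-cases $a\le\ell$ and $\ell<a$; this is a harmless streamlining that gives the same bound $2h_2\le 2q-n$.
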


\begin{proof}
For $b<p\le b+q$ we have $r=p+s$ with $m<p$. Lucas' theorem gives $p\nmid\binom rk$ exactly for $k\le s$, and hence
\[
H_p=\{r-p+1,\ldots,m\},\qquad |H_p|=p-b.
\]
These sets increase with $p$. Thus, if $h_1>0$, the union of the exclusion sets of all omitted primes in the first band is
\[
A=\{m-h_1+1,\ldots,m\}\subseteq Q;
\]
for $h_1=0$ set $A=\varnothing$.

For $(r+1)/3<p\le(r+q+1)/3$ we have $r=2p+s$ with $0\le s<p$ and $m=p+\lfloor s/2\rfloor<2p$. Lucas' theorem shows that divisibility by $p$ occurs in the half-row exactly for $s+1\le k\le p-1$. Hence
\[
H_p=\{r-2p+1,\ldots,p-1\},\qquad |H_p|=3p-r-1.
\]
These sets also increase with $p$. Therefore, if $h_2>0$, their union over omitted primes is
\[
B=\left\{\frac{r-2h_2+1}{3},\ldots,\frac{r+h_2-2}{3}\right\}\subseteq Q;
\]
for $h_2=0$ set $B=\varnothing$.

It remains to exploit the fact that both $A$ and $B$ must fit inside the same $q$-element set $Q$.

If $A$ and $B$ are disjoint, then
\[
h_1+h_2=|A\cup B|\le q,
\]
so \eqref{eq:twobandgeometry} follows immediately.

Assume now that $A$ and $B$ overlap. Write
\[
r=2m+\varepsilon,
\qquad \varepsilon\in\{0,1\},
\]
so that
\[
m=n+q-1.
\]
Let
\[
a=m-h_1+1
\]
be the left endpoint of $A$ and
\[
\ell=\frac{r-2h_2+1}{3}
\]
the left endpoint of $B$.

First suppose $a\le\ell$. Since $A$ ends at $m$ and $B$ ends at most at $m$, the overlap together with $a\le\ell$ gives $B\subseteq A$. The inequality $a\le\ell$ is equivalent to
\[
2h_2\le3h_1-n-q-1+\varepsilon.
\]
Using $h_1\le q$ and $\varepsilon\le1$, we obtain
\[
 h_1+\frac{h_2}{3}
 \le
 q+\frac{2q-n}{6}
 \le
 q+\frac{(2q-n)_+}{6}.
\]

It remains to consider $\ell<a$. Since the intervals overlap, $A\cup B$ is the interval from $\ell$ to $m$. The inclusion $A\cup B\subseteq Q$ therefore gives
\[
m-\ell+1\le q,
\]
or equivalently $\ell\ge n$. Substituting the value of $\ell$ yields
\[
2h_2\le2q-n-1+\varepsilon\le2q-n.
\]
In particular this case is possible only when $2q\ge n$. Since $h_1\le q$,
\[
 h_1+\frac{h_2}{3}
 \le
 q+\frac{2q-n}{6}
 =
 q+\frac{(2q-n)_+}{6}.
\]
This proves the lemma.
\end{proof}

\begin{proposition}[Refined row-wise lower bound]\label{prop:refinedlower}
There exists an absolute constant $c>0$ such that, uniformly for $2n-2\le r<4n$ and
\[
q=\left\lfloor\frac r2\right\rfloor+1-n,
\]
one has
\begin{equation}
\boxed{
\log A_r(n)
\ge
2n+\min\!\left(\frac q3,\frac n6\right)
-O\!\left(ne^{-c\Phi(n)}\right).
}
\tag{5.10}\label{eq:refinedlower}
\end{equation}
\end{proposition}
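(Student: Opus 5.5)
The plan is to redo the band decomposition from the proof of Theorem \ref{thm:main}. The bands $j\ge3$ are treated exactly as before, but in the first two bands we keep every prime that must be present, not only the individually forced ones. Fix $r$ with $2n-2\le r<4n$ and an $n$-element support $S$ attaining $A_r(n)$, and let $Q,m,q,b$ be as in Lemma \ref{lem:twobands}. Define $h_1,h_2$ from $S$ as in that lemma.

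\textbf{Step 1: which light primes survive.} The sets $H_p$ are nested in the first band $b<p\le b+q$. Every prime there with $|H_p|=p-b\le h_1$ has $H_p\subseteq A\subseteq Q$, so it is absent. Every prime with $p-b>h_1$ is present, by the definition of $h_1$ as the largest absent value. So the present primes in this light interval are exactly those in $(b+h_1,\,b+q]$. Similarly, the present primes in the second light interval $((r+1)/3,(r+q+1)/3]$ are exactly those with $p>(r+1+h_2)/3$.

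\textbf{Step 2: matching with the forced intervals.} Check the endpoints from Corollary \ref{cor:intervals}. Since $b+q=r+1-n$ and $L_1=r+2-n$, the first light interval and the forced interval $[L_1,r]$ together cover all primes in $(b,r]$. Since $L_2=(r+1)/2-(n-1)/3$ and $(r+q+1)/3$ differ by $O(1)$, the same holds in $(r/3,r/2]$. Hence every prime in $(b+h_1,r]$ and in $((r+1+h_2)/3,\,r/2]$ divides $A_r(n)$, up to $O(1)$ boundary primes. Adding the forced bands $3\le j\le J$, with $J=\lfloor e^{\eta\Phi(n)}\rfloor$ as before, gives
\[
\log A_r(n)\ge \theta(r)-\theta(b+h_1)+\theta(r/2)-\theta\bigl(\tfrac{r+1+h_2}{3}\bigr)+\sum_{j=3}^{J}\bigl(\theta(U_j)-\theta(L_j)\bigr)-O(\log n).
\]
All of these primes exceed $\sqrt r$, so Lemma \ref{lem:forced} applies, and they lie in disjoint bands.

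\textbf{Step 3: PNT and arithmetic.} Apply \eqref{eq:pnt} uniformly at all endpoints, exactly as in the proof of Theorem \ref{thm:main}. The error is $O(ne^{-c\Phi(n)})$ regardless of how short the intervals are, because the errors are attached to endpoints of size $\asymp n/j$. Use $r=2m+\varepsilon=2n+2q-2+\varepsilon$. The first band contributes $r-b-h_1=n+q-h_1+O(1)$. The second band contributes $r/6-h_2/3+O(1)=(n+q)/3-h_2/3+O(1)$. By \eqref{eq:bandlength}, the bands $j\ge3$ contribute $\sum_{j= 3}^J\frac{2n-j-3}{j(j+1)}=\tfrac{2n}{3}+O(n/J+\log J)$. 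The total is
\[
2n+\tfrac{4q}{3}-\bigl(h_1+\tfrac{h_2}{3}\bigr)-O\!\left(ne^{-c\Phi(n)}\right).
\]
Inserting \eqref{eq:twobandgeometry} gives $2n+\tfrac q3-\tfrac{(2q-n)_+}{6}$. This equals $\min(q/3,n/6)$ after splitting into the cases $2q\le n$ and $2q>n$, which proves \eqref{eq:refinedlower}.

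\textbf{Main obstacle.} The step I expect to be delicate is Step 2: confirming that the light intervals abut the forced intervals with only $O(1)$ gaps, uniformly in the parity of $r$ and in $q$. Closely tied to this is checking that the ``present iff $|H_p|>h_i$'' description in Step 1 is exact. If a gap of size comparable to $n$ appeared between them, the main term would be lost. The constant-size boundary discrepancies only cost $O(\log n)$, which is absorbed into the error term.
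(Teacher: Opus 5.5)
Your proposal is correct and follows essentially the paper's argument. It uses the same light intervals $(b,b+q]$ and $((r+1)/3,(r+q+1)/3]$, the same nestedness description of which light primes are present, Lemma~\ref{lem:twobands} for the final inequality, and the uniform PNT estimate at endpoints of size $\asymp n/j$. The only difference is bookkeeping: you merge each light interval with the forced interval of its band, whereas the paper keeps the full forced contribution $2n$ and adds the disjoint light masses $q-h_1$ and $(q-h_2)/3$ on top.

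The $O(1)$-gap concern in your Step~2 is in fact vacuous. One has $b+q=r+1-n$ and $L_1=r+2-n$, and $L_2-(r+q+1)/3=(\varepsilon+1)/6$, so neither gap contains an integer.
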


\begin{proof}
Fix an $n$-element support $S\subseteq\{0,\ldots,m\}$ and let $L(S)$ denote the LCM of the corresponding coefficients. We prove the asserted lower bound for $\log L(S)$ uniformly in $S$.

The forced-prime argument used in the proof of Theorem \ref{thm:main} is row-wise: uniformly for every $2n-2\le r<4n$, the primes in the intervals $I_j$ with $1\le j\le J$ contribute
\begin{equation}
2n-O\!\left(ne^{-c_1\Phi(n)}\right)
\tag{5.11}\label{eq:forceduniform}
\end{equation}
to $\log L(S)$. We now add primes from the parts of the first two bands immediately below the corresponding forced intervals.

Put
\[
b=\left\lceil\frac r2\right\rceil.
\]
For the first band put
\[
\mathcal E_1=\{p\text{ prime}: b<p\le b+q\}.
\]
By Lemma \ref{lem:twobands}, the omitted primes in $\mathcal E_1$ form the initial segment $b<p\le b+h_1$. Hence the retained logarithmic mass is
\[
\theta(b+q)-\theta(b+h_1)
=q-h_1+O\!\left(ne^{-c_2\Phi(n)}\right).
\]
Likewise, for
\[
\mathcal E_2=\left\{p\text{ prime}:\frac{r+1}{3}<p\le\frac{r+q+1}{3}\right\},
\]
the omitted primes are those up to $(r+h_2+1)/3$, and the retained mass is
\[
\theta\!\left(\frac{r+q+1}{3}\right)-
\theta\!\left(\frac{r+h_2+1}{3}\right)
=\frac{q-h_2}{3}+O\!\left(ne^{-c_2\Phi(n)}\right).
\]
All endpoints are comparable with $n$. These prime sets are disjoint from each other and from the forced intervals in \eqref{eq:forceduniform}: in the first band $b+q=r+1-n<L_1$, while in the second, writing $r=2m+\varepsilon$, one has
\[
L_2-\frac{r+q+1}{3}=\frac{\varepsilon+1}{6}>0.
\]
Thus the extra retained prime mass is
\[
\frac{4q}{3}
-
\left(h_1+\frac{h_2}{3}\right)
-O\!\left(ne^{-c_2\Phi(n)}\right).
\]
Lemma \ref{lem:twobands} therefore gives the lower bound
\[
\frac{4q}{3}
-
q-\frac{(2q-n)_+}{6}
-O\!\left(ne^{-c_2\Phi(n)}\right).
\]
The main term equals
\[
\begin{cases}
q/3,&2q\le n,\\[2mm]
n/6,&2q\ge n,
\end{cases}
\]
that is,
\[
\min\!\left(\frac q3,\frac n6\right).
\]
Combining this with \eqref{eq:forceduniform} proves \eqref{eq:refinedlower} for every support $S$, and hence for $A_r(n)$.
\end{proof}

\begin{proof}[Proof of Theorem \ref{thm:rowlocal}]
Let $r_n$ be any optimal row and put
\[
q_n=\left\lfloor\frac{r_n}{2}\right\rfloor+1-n.
\]
By Lemma \ref{lem:localize}, $2n-2\le r_n<4n$ for large $n$, so Proposition \ref{prop:refinedlower} applies. Combining it with the upper bound \eqref{eq:uppermain} gives
\[
\min\!\left(\frac{q_n}{3},\frac n6\right)
=O\!\left(ne^{-c\Phi(n)}\right)
\]
for some absolute $c>0$. Since the right side is $o(n)$, the second alternative is impossible for sufficiently large $n$. Hence
\[
q_n=O\!\left(ne^{-c\Phi(n)}\right).
\]
Because
\[
q_n=\left\lfloor\frac{r_n}{2}\right\rfloor+1-n,
\]
we obtain
\[
r_n=2n+O\!\left(ne^{-c\Phi(n)}\right).
\]

Finally, every $n$-element support in row $r_n$ has prefix defect at most
\[
d_{r_n}-n=q_n.
\]
Taking the minimum over optimal supports gives
\[
\delta(n)=O\!\left(ne^{-c\Phi(n)}\right)=o(n).
\]
\end{proof}

\section{Computer-certified finite structure}\label{sec:finite}
The analytic results above are independent of computation. We now return to the finite support geometry suggested by Example \ref{ex:n15} and certify the first structural transitions exactly.

For $S\subseteq\{0,\ldots,\lfloor r/2\rfloor\}$, recall
\[
\delta(S)=1+\max S-|S|.
\]
For an $n$-element support, the condition $\delta(S)\le t$ is equivalent to $\max S\le n+t-1$. Thus defect-restricted minima are obtained simply by restricting the allowed indices.

\subsection{Exact certification within a row}
For fixed $r$, write $c_k=\binom rk$, $0\le k<d_r$. For a finite set $X$ of positive integers, let
\[
\operatorname{MinDiv}(X)=\{x\in X:\text{no distinct }y\in X\text{ satisfies }y\mid x\}.
\]
Set
\[
D_{0,0}=\{1\},\qquad D_{0,t}=\varnothing\quad(t>0),
\]
and recursively
\begin{equation}
D_{j+1,t}=\operatorname{MinDiv}\!\left(D_{j,t}\cup\{\operatorname{lcm}(L,c_j):L\in D_{j,t-1}\}\right),
\tag{6.1}\label{eq:dprec}
\end{equation}
with the second set omitted for $t=0$.

\begin{theorem}[Exact LCM-state dynamic program]\label{thm:lcmdp}
For $0\le t\le d_r$,
\[
\boxed{A_r(t)=\min D_{d_r,t},}
\]
with $A_r(0)=1$.
\end{theorem}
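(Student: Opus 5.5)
I would prove a stronger invariant by induction on $j$, for $0\le j\le d_r$ and $0\le t\le d_r$. Let
\[
\mathcal L_{j,t}=\left\{\lcmop\{c_k:k\in T\}:T\subseteq\{0,\ldots,j-1\},\ |T|=t\right\},
\]
with the empty lcm equal to $1$. The invariant has two parts: (i) $D_{j,t}\subseteq\mathcal L_{j,t}$, and (ii) every element of $\mathcal L_{j,t}$ is divisible by some element of $D_{j,t}$.

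From the invariant the theorem follows at once. By (i), $\min D_{d_r,t}\ge\min\mathcal L_{d_r,t}=A_r(t)$. By (ii), the lcm attaining $A_r(t)$ is divisible by some $x\in D_{d_r,t}$, so $x\le A_r(t)$. The two bounds give equality. Nonemptiness of $D_{d_r,t}$ for $t\le d_r$ is also a consequence of (ii), since $\mathcal L_{d_r,t}\neq\varnothing$. The base case $j=0$ is immediate, because $\mathcal L_{0,0}=\{1\}$ and $\mathcal L_{0,t}=\varnothing$ for $t>0$.

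For the inductive step, split each $t$-subset of $\{0,\ldots,j\}$ according to whether it contains $j$. This gives the exact recursion
\[
\mathcal L_{j+1,t}=\mathcal L_{j,t}\cup\{\lcmop(L,c_j):L\in\mathcal L_{j,t-1}\}.
\]
Part (i) then follows because $\operatorname{MinDiv}(X)\subseteq X$, and $\lcmop(L,c_j)\in\mathcal L_{j+1,t}$ whenever $L\in D_{j,t-1}\subseteq\mathcal L_{j,t-1}$.

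For part (ii), take $M\in\mathcal L_{j+1,t}$. If $M\in\mathcal L_{j,t}$, the hypothesis gives $x\in D_{j,t}$ with $x\mid M$. Otherwise $M=\lcmop(L,c_j)$ with $L\in\mathcal L_{j,t-1}$, and we pick $y\in D_{j,t-1}$ with $y\mid L$. Then $\lcmop(y,c_j)\mid\lcmop(L,c_j)=M$, and this is the monotonicity step. In either case $M$ is divisible by some element $z$ of the set $Y$ to which $\operatorname{MinDiv}$ is applied.

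The remaining step, and the only one needing care, is the lemma that for a finite set $Y$ of positive integers every element of $Y$ is divisible by some element of $\operatorname{MinDiv}(Y)$. I would prove it by choosing a divisor of $z$ in $Y$ that is minimal with respect to divisibility. This is possible because $Y$ is finite and divisibility is a partial order on positive integers, so there are no infinite descending chains. Such a minimal element has no distinct divisor in $Y$, so it lies in $\operatorname{MinDiv}(Y)$. Combining this with the previous step gives (ii) and completes the induction.
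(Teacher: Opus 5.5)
Your proof is correct and follows the same approach as the paper. The paper argues by induction on $j$, using the include/exclude subset recurrence and the monotonicity $\operatorname{lcm}(L_1,x)\mid\operatorname{lcm}(L_2,x)$ for $L_1\mid L_2$ to justify discarding dominated states. Your two-part invariant, together with the lemma that $\operatorname{MinDiv}(Y)$ keeps a divisor of every element of $Y$, makes explicit the details the paper's one-line ``induction on $j$'' leaves implicit.
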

\begin{proof}
Without pruning, \eqref{eq:dprec} is the standard subset recurrence. If $L_1\mid L_2$, then for every positive integer $x$,
\[
\operatorname{lcm}(L_1,x)\mid\operatorname{lcm}(L_2,x).
\]
Hence a state divisible by another state of the same cardinality can never yield a smaller future LCM and may be discarded. Induction on $j$ proves the result.
\end{proof}

Together with the row cutoff from Theorem \ref{thm:cutoff}, Theorem \ref{thm:lcmdp} yields a finite exact certificate for every value used below. An independent second implementation is included in the supplementary archive.

\subsection{The first non-prefix optimum}
The first global departure from prefix optimality occurs at $n=15$. Example \ref{ex:n15} already exhibited the winning support in row $34$; the purpose of the certification below is to prove that no earlier $n$ and no competing row does better.

\begin{proposition}[Computer-certified]\label{prop:n15}
For $1\le n\le14$, $\delta(n)=0$. At $n=15$,
\[
\delta(15)=1,
\qquad
a(15)=72\,382\,733\,280,
\]
with the optimal support from Example \ref{ex:n15}. The best prefix value is $77\,636\,318\,760$, attained in row $29$.
\end{proposition}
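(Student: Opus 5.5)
The plan is to reduce Proposition~\ref{prop:n15} to a finite, exactly checkable computation built from Theorem~\ref{thm:cutoff} and Theorem~\ref{thm:lcmdp}. For each $n\le 15$ I would first fix an upper bound $U\ge a(n)$. The cheapest choice is the better of $U_n$ and the best prefix value $\min_r\operatorname{lcm}\{\binom rk:0\le k<n\}$, evaluated over a few rows; for $n=15$ one can also use the explicit value $72\,382\,733\,280$ from Example~\ref{ex:n15}. Theorem~\ref{thm:cutoff} then says that only rows $2n-2\le r\le R$ with $\binom r{n-1}\le U$ need to be examined. Because $\binom r{n-1}$ grows polynomially in $r$ of degree $n-1$, this range is short, a few dozen rows at most for $n\le15$. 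The row bound $R$ is found by directly incrementing $r$ until $\binom r{n-1}>U$.

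In each admissible row I would run the pruned dynamic program~\eqref{eq:dprec} in exact integer arithmetic and read off $A_r(n)=\min D_{d_r,n}$. Taking the minimum over the rows gives $a(n)$. To certify $\delta(n)$, I would use the observation recorded above: the condition $\delta(S)\le t$ is equivalent to restricting the indices to $k\le n+t-1$. So I run the same program with the index set truncated at $n+t-1$, giving a defect-restricted minimum $a_t(n)$. Then $\delta(n)$ is the least $t$ with $a_t(n)=a(n)$.

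For $n\le14$ it suffices to check $a_0(n)=a(n)$, which is just a comparison of the prefix minimum over admissible rows with the global minimum. For $n=15$ the checks are:
\begin{itemize}
\item $a_1(15)=a(15)=72\,382\,733\,280$, attained in row $34$ by the support $\{0,\ldots,7,9,\ldots,15\}$. This attainment was already verified arithmetically in Example~\ref{ex:n15} via Theorem~\ref{thm:exclusion}, namely $\Lambda_{34}/(3\cdot19)$ with exclusion set $\{8,16,17\}$.
\item $a_0(15)=77\,636\,318\,760>a(15)$, attained in row $29$. The prefix computation is trivial, just LCMs of initial segments row by row.
\end{itemize}
The lower-bound half of the claim, that no row does better, comes from the dynamic program over the full cutoff range.

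The main obstacle is making the certificate trustworthy rather than merely computed. Correctness of the pruning rests on Theorem~\ref{thm:lcmdp}, and the finiteness of the row range rests on Theorem~\ref{thm:cutoff}. What remains is implementation risk, such as an off-by-one error in $d_r$ or in the truncation $n+t-1$, or overflow. I would address this with arbitrary-precision integers and an independent cross-check. The cross-check would enumerate exponent vectors via Theorem~\ref{thm:exclusion}, restricted to primes $p\le r$ with $E_p(r)$ from Proposition~\ref{prop:Ep}, and confirm the same minima. The state sets $D_{j,t}$ could blow up, but for $d_r\lesssim 40$ antichain pruning under divisibility keeps them manageable.
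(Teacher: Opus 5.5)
Your plan is correct and is essentially the paper's certification: the Example~\ref{ex:n15} candidate and Theorem~\ref{thm:cutoff} cut the search to $28\le r\le 42$, since $\binom{43}{14}=78\,378\,960\,360>a(15)$. The pruned dynamic program of Theorem~\ref{thm:lcmdp} and its index-truncated variants then give the exact row minima and defect-restricted minima, and an independent exclusion-based verifier cross-checks them.

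One point to state explicitly is that the claim about the best prefix value needs its own row cutoff, because that problem is minimized over all rows. Rows $r\ge 43$ are excluded there too, since $\binom{43}{14}$ also exceeds $77\,636\,318\,760$.
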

\begin{proof}[Certification]
The candidate from Example \ref{ex:n15} and
\[
\binom{43}{14}=78\,378\,960\,360>a(15)
\]
reduce the unrestricted search to $28\le r\le42$ by Theorem \ref{thm:cutoff}. The exact minima in these rows are listed in \texttt{row\_minima.csv} and are reproduced by both certification programs. The defect-restricted certificates show $\delta(n)=0$ for $n\le14$ and give the stated best prefix value at $n=15$.
\end{proof}

\subsection{The first genuinely multi-hole optimum}
The second structural transition occurs at $n=41$: the minimum defect jumps from the previously observed values $0$ and $1$ to $3$. Thus one missing index no longer suffices, and in fact neither do two.

\begin{proposition}[Computer-certified]\label{prop:n41}
Among $n\le40$, the values with $\delta(n)=1$ are exactly
\[
15,\ 23,\ 24,\ 36,
\]
and all remaining values have defect $0$. At $n=41$,
\[
\boxed{\delta(41)=3.}
\]
One optimum is attained in row $90$ by
\[
S_{41}=\{0,1,\ldots,43\}\setminus\{22,29,30\},
\]
with
\[
a(41)=\frac{\Lambda_{90}}{23\cdot31\cdot47}.
\]
Exactly five half-row coefficients fail to divide this integer, at indices
\[
22,29,30,44,45.
\]
Moreover,
\[
\frac{\Lambda_{90}}{a(41)}=23\cdot31\cdot47,
\]
and the corresponding exclusion sets are
\[
H_{23,0}(90)=\{22,45\},\qquad
H_{31,0}(90)=\{29,30\},\qquad
H_{47,0}(90)=\{44,45\}.
\]
Among all $41$-element supports of defect at most $2$, the least possible LCM is $13a(41)/4$, already attained with defect $1$ in row $91$.
\end{proposition}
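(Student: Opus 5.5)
This is a computer-certified proposition, so the plan is to reduce it to a finite exact search using Theorem \ref{thm:cutoff} and Theorem \ref{thm:lcmdp}, and to supplement the search with hand-checkable arithmetic wherever possible.

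\emph{The candidate at $n=41$.} Row $90$ has $d_{90}=46$ distinct entries, so $q=5$. By Proposition \ref{prop:Ep} and Lucas' theorem, the exclusion sets of the three large primes can be verified by hand. For $p=47$ we have $90=47+43$, so $H_{47,0}=\{44,45\}$. For $p=31$ we have $90=2\cdot31+28$, so $H_{31,0}=\{29,30\}$. For $p=23$ we have $90=3\cdot23+21$; the $p$-divisible indices in the half-row are $22$ and $45$ (from digit $u=1$ with $v=22$, and $u=1$ with $v=22>21$ giving $45$). Next, the exponent of every other prime is left at $E_p(90)$, so the set of non-dividing indices is exactly the union $\{22,29,30,44,45\}$. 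The $41$ survivors therefore divide $\Lambda_{90}/(23\cdot31\cdot47)$, and the support $S_{41}$ has $\max S=43$ and defect $3$. The exact LCM of $S_{41}$ also has to be checked to equal this quotient, i.e.\ no further prime-power saving occurs; this is a direct computation.

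\emph{Global optimality.} The candidate value $U=a(41)$ serves as the cutoff in Theorem \ref{thm:cutoff}. One computes the largest $r$ with $\binom r{40}\le U$, which gives a modest finite range starting at $r=80$. In each row of that range, the dynamic program of Theorem \ref{thm:lcmdp} is run to obtain $A_r(41)$. For the defect statements, the program is rerun with indices restricted to $k\le n+t-1$ for $t=0,1,2$, for every $n\le41$ and every row up to that $n$'s cutoff. This yields the list $15,23,24,36$ for $\delta(n)=1$, shows that no $n\le40$ needs defect $\ge2$, and gives the defect-$\le2$ minimum $13a(41)/4$ in row $91$. The claim that this minimum is already attained with defect $1$ comes from comparing the $t=1$ and $t=2$ runs. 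The same restricted runs with $t=0,1,2$ at $n=41$ are strictly larger than the unrestricted minimum, which forces $\delta(41)=3$.

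\emph{Main obstacle.} The expected difficulty is computational: the state sets $D_{j,t}$ can grow large for rows near $90$ with $t\approx41$. The first remedy is the MinDiv pruning. If that is not enough, a second remedy is to discard any state whose value already exceeds the current upper bound $U$, which is valid because the LCM is monotone under adding elements. Correctness is then cross-checked by the independent second implementation on all rows.
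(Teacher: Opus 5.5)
Your plan is the paper's certification. The exclusion sets $H_{23,0}(90)$, $H_{31,0}(90)$, $H_{47,0}(90)$ produce the candidate. Theorem~\ref{thm:cutoff} with $\binom{122}{40}>\Lambda_{90}/(23\cdot31\cdot47)$ reduces the unrestricted search to $80\le r\le121$. Exact per-row runs of the dynamic program, unrestricted and with indices $k\le n+t-1$, then settle the defect claims. There is a minor slip: $22=0\cdot23+22$ has $u=0$, and only $45=1\cdot23+22$ has $u=1$.

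One step does not go through as written: the row range for the restricted runs at $n=41$.

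\emph{What is fine.} Running the restricted programs only up to each $n$'s unrestricted cutoff is enough for every $\delta$-statement, including $\delta(41)=3$. A row with $\binom r{n-1}>a(n)$ cannot attain $a(n)$ with any support.

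\emph{What fails.} The final assertion is that the least LCM over \emph{all} $41$-element supports of defect at most $2$ equals $13a(41)/4$. This is a global minimum of a restricted problem whose value exceeds $a(41)$. So the row cutoff must be recomputed with $U=13a(41)/4$; Lemma~\ref{lem:largest} applies equally to restricted supports. Numerically $\log a(41)\approx74.540$ and $\log(13a(41)/4)\approx75.719$. By comparison, $\log\binom{122}{40}\approx74.617$, $\log\binom{124}{40}\approx75.400$ and $\log\binom{125}{40}\approx75.786$. Hence rows $122$, $123$, $124$ are not excluded, and your search up to $r=121$ does not show that row $91$ gives the global defect-$\le2$ optimum.

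\emph{The fix.} Extend the restricted runs to $80\le r\le124$, or rule out those three rows by a separate argument. In these runs, your second pruning rule must use an upper bound for the restricted minimum, such as $13a(41)/4$, as its threshold. Pruning at $a(41)$ empties the final state set. That only proves the restricted minimum exceeds $a(41)$; it does not compute the value $13a(41)/4$.

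For what it is worth, the paper's own certification states the range $80\le r\le121$ explicitly only for the unrestricted check. It defers the restricted minima to its supplementary tables.
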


\begin{proof}[Certification]
The displayed candidate satisfies
\[
\binom{122}{40}>a(41),
\]
so Theorem \ref{thm:cutoff} reduces the unrestricted check to $80\le r\le121$. Exact unrestricted and defect-restricted row minima are contained in the supplementary tables and are reproduced independently by both implementations. These data prove the claims about $\delta(n)$ through $41$ and show that defect $2$ does not improve the best defect-$1$ value at $n=41$.

The three displayed exclusion sets give the arithmetic explanation for the optimal pattern in row $90$: their union is exactly
\[
\{22,29,30,44,45\}.
\]
Thus the computation establishes global optimality, while Theorem \ref{thm:exclusion} explains the shape of the support.
\end{proof}

The two examples isolate the first two qualitative changes in optimal support geometry:
\[
\text{prefix}\quad\longrightarrow\quad\text{one hole}\quad\longrightarrow\quad\text{genuinely multi-hole}.
\]
They are not used in the asymptotic arguments of Section 5.

\paragraph{Certification and reproducibility.}
The finite assertions above were checked by two independent exact implementations: an LCM-state dynamic program with divisibility pruning and a prime-power exclusion verifier using excluded-index bitmasks. Complete row-by-row certificates, source code, outputs, and checksums are provided in the supplementary archive. No floating-point or heuristic comparisons are used.

\section{Discussion}
The results give three complementary descriptions of the same optimization problem. Theorem \ref{thm:exclusion} is local and arithmetic: it identifies exactly which coefficients are lost when a prime-power budget is lowered. Theorem \ref{thm:main} is global and analytic: after all possible local savings are allowed, the surviving large primes still contribute logarithmic mass $2n+o(n)$. Theorem \ref{thm:rowlocal} adds rigidity: an optimum cannot realize that saving far out in Pascal's triangle, but must occur at row $2n+o(n)$.

The finite examples then become structural rather than exceptional. The hole at $n=15$ and the multi-hole pattern at $n=41$ are concrete manifestations of the same prime-power tradeoff described by Theorem \ref{thm:exclusion}. Thus the paper moves from local $p$-adic constraints to global asymptotics and returns, with that structure in hand, to explain the geometry of optimal finite supports.

From the viewpoint of the classical full-row identity, allowing arbitrary subsets changes the finite structure but not the first-order asymptotic scale. The freedom to omit coefficients is genuine---already at $n=15$ it improves on every prefix---but asymptotically it cannot move an optimum far from the first admissible row or remove a positive proportion of the initial support. The prime-power exclusion principle gives the structural link between the full-row and subset problems.

\section{Outlook}
Theorems \ref{thm:main} and \ref{thm:rowlocal} determine the first-order scale of both the value and the location of the optimum. Three finer questions remain natural.

First, what is the true size of
\[
r_n-(2n-2)?
\]
Theorem \ref{thm:rowlocal} makes it $o(n)$, but the correct order may be far smaller. Second, how large can the prefix defect $\delta(n)$ become, and how often is it positive? The certified transitions at $15$ and $41$ show that non-prefix structure is genuine, while the localization theorem proves only that it is asymptotically sparse.

Both questions ask for information that is invisible in the leading term $2n$ and require a finer understanding of how the prime-power exclusion sets overlap near the first admissible rows.

\medskip
\textbf{Data and code availability.} All source code, exact output logs, machine-readable certificates, and SHA-256 checksums used to certify the finite results of Section~\ref{sec:finite} are provided in the supplementary reproducibility archive accompanying this submission. \\[-1mm]
\textbf{Declaration of generative AI and AI-assisted technologies.} During the development and preparation of this work, the author used OpenAI ChatGPT to assist with exploratory mathematical discussion, code drafting, organization of the manuscript, and language editing. The author reviewed and edited the resulting material, verified the mathematical content and references, and takes full responsibility for the content of the manuscript.

\end{document}